\documentclass[preprint,11pt]{elsarticle}
\usepackage{lineno,hyperref}
\modulolinenumbers[5]

\usepackage{color}
\usepackage{amsmath}
\usepackage{amsfonts,amsthm,amssymb}
\usepackage{amsfonts}
\usepackage{graphics}
\usepackage{pstricks}
\usepackage{graphicx}
\usepackage{cleveref}
\usepackage{extarrows,chngpage,array,float}
\usepackage{amssymb}
\usepackage{latexsym,bm}
\setlength{\textwidth}{145mm}

\newtheorem{theorem}{Theorem}[section]
\newtheorem{lemma}[theorem]{Lemma}

\newtheorem{corollary}[theorem]{Corollary}

\allowdisplaybreaks   

\numberwithin{equation}{section}
\journal{}













\bibliographystyle{elsarticle-num}


\begin{document}

\begin{frontmatter}

\title{Coefficients of univariate Tutte polynomials with one variable fixed}

\author{Tianlong Ma$^a$, \ \ Xiaxia Guan$^b$, \ \ Xian'an Jin$^{c,d,}$\footnote{Corresponding author.}, Hong-Jian Lai$^{e,f}$
	\\[1ex]
	\small $^a$School of Science\\[-0.8ex]
	\small Jimei University\\[-0.8ex]
	\small P. R. China
\\[0.3ex]
\small $^b$Department of Mathematics\\[-0.8ex]
\small Taiyuan University of Technology\\[-0.8ex]
\small P. R. China
\\[0.3ex]
\small $^c$School of Mathematics and Statistics\\[-0.8ex]
\small Qinghai Minzu University\\[-0.8ex]
\small P. R. China
\\[0.3ex]
\small $^d$School of Mathematical Sciences\\[-0.8ex]
\small Xiamen University\\[-0.8ex]
\small P. R. China
\\[0.3ex]
\small $^e$School of Mathematics and Systems Science\\[-0.8ex]
\small Guangdong Polytechnic Normal University\\[-0.8ex]
\small P. R. China
\\[0.3ex]
\small $^f$Department of Mathematics \\[-0.8ex]
\small West Virginia University\\[-0.8ex]
\small  USA
\\
\small\tt Email: tianlongma@aliyun.com, guanxiaxia@tyut.edu.cn, xajin@xmu.edu.cn, hjlai2015@hotmail.com}

\begin{abstract}

It is well known that the 2-variable Tutte polynomial of a graph $G$ includes chromatic polynomial and flow polynomial of $G$, i.e. the cases of $y=0$ and $x=0$. In 2013, K\'{a}lm\'{a}n introduced the interior and exterior polynomials which generalized the cases of $y=1$ and $x=1$ of Tutte polynomials of graphs to hypergraphs, and further polymatroids. There have been some results on coefficients of these polynomials, which motivate us to study uniformly the coefficients of $T_M(x,t)$ and $T_M(t,y)$, where $T_M(x,y)$ denotes the Tutte polynomial of a matroid $M$ and $t$ is a fixed real number. In this paper, we introduce two mutually dual parameters $f_k(M)$ and $g_k(M)$ ($g_1(M)$ is the girth of $M$) for any nonnegative integer $k$, and obtain the following results: (1) Formulas for coefficients of the higher-degree terms (related to $g_2(M)$ and $f_2(M)$, respectively) of $T_M(x,t)$ and $T_M(t,y)$ in terms of circuits and hyperplanes of $M$; (2)  when $0\leq t \leq 1$, coefficients of the more higher-degree terms (related to $g_1(M)$ and $f_1(M)$, respectively) of $T_M(x,t)$ and $T_M(t,y)$ are further simplified and characterized; (3) As applications, some known results in the cases $t=0$ and $t=1$ are derived and generalized, and the unimodality of these coefficients in (1) are proved when $t\leq 1$.
\end{abstract}

\begin{keyword}
Matroid; Tutte polynomial; Chromatic polynomial; Flow polynomial; Coefficient; Circuit; Hyperplane
\MSC[2020] 05C31\sep 05B35
\end{keyword}

\end{frontmatter}

\section{Introduction}

Let $X$ be a finite set. Let $M=(X,rk)$ denote a matroid on the ground set $X$ with the rank function $rk$. The rank of $M$, denoted by $rk(M)$, is defined to be $rk(X)$, and for short we always assume that $r=rk(M)$ if $M$ is understood from the context. We refer to \cite{Oxley} for undefined notions on matroids and graphs. The Tutte polynomial of graphs, as a generalization of the chromatic polynomial, was introduced by Tutte \cite{Tutte} in 1954,
and extended to matroids by Crapo \cite{Crapo} in 1969.
The \textit{Tutte polynomial} of a matroid $M=(X,rk)$ is defined by
\begin{align}\label{TutteP}
T_{M}(x,y)=\sum_{A\subseteq X}(x-1)^{r-rk(A)}(y-1)^{|A|-rk(A)}.
\end{align}

For a polynomial $f(x)$ in the variable $x$, let $[x^{i}]f(x)$ denote the coefficient of $x^{i}$ in $f(x)$. In this paper, we study the coefficients of Tutte polynomials of matroids in a unified manner when one of its variables is fixed.
We first recall some concepts of matroids. For a matroid $M=(X,rk)$ and $C\subseteq X$, $C$ is called a \textit{circuit}
of $M$ if for all $e\in C$, $rk(C\setminus \{e\})=|C|-1=rk(C)$.
Let $\mathcal{C}(M)$ denote the set of all circuits of $M$. Set
$$\mathcal{C}_{i}=\{C: C\in \mathcal{C}(M)~\text{and}~|C|=i\}.$$ For $A\subseteq X$, the set $\{e\in X:rk(A\cup \{e\})=rk(A)\}$ is called the \textit{closure} of $A$, denoted by $cl_M(A)$. Further, a subset $F$ of $X$ for which $cl_M(F)=F$ is called a \textit{flat} or a \textit{closed set} of $M$. In particular, a \textit{hyperplane} of $M$ is a flat of rank $r-1$. Let $\mathcal{F}(M)$ denote the set of all flats of a matroid $M$, and let $\mathcal{H}(M)$ denote the set of all hyperplanes of a matroid $M$. Set
$$\mathcal{H}_{i}=\{H: H\in \mathcal{H}(M)~\text{and}~|H|=i\}.$$
Now we define two new parameters $f_k(M)$ and $g_k(M)$.
\vskip0.2cm
\noindent $\bullet$ For any nonnegative integer $k$, define
$$f_k(M)=\max\{|F|: rk(F)=r-k\}$$ unless no such set $F$ exists, in which case we define $f_k(M)=\infty $. Then the following basic properties on $f_k(M)$ are all clear: (1) if $F$ is a maximum subset in $X$ such that $rk(F)=r-k$, then $F\in \mathcal{F}(M)$; (2) $f_0(M)=|X|$ and $f_1(M)$ is equal to the size of the maximum hyperplane in $M$; (3) $f_{k}(M)$ exists if and only if $k\leq r$, and (4) $f_{k+1}(M)<f_{k}(M)$ for $k<r$.

\noindent $\bullet$  For any nonnegative integer $k$, define
\[g_k(M)=\min\{|A|:  rk(A)=|A|-k\}\]
unless no such set $A$ exists, in which case we define $g_k(M)=\infty $. We have the following basic properties on $g_k(M)$: (1) if $A$ is a minimum subset in $X$ such that $rk(A)=|A|-k$, then $A\in \mathcal{G}(M)=\{A\subseteq X: rk(A\setminus \{e\})=rk(A) \text{ for any  }e\in A\}$; (2) $g_0(M)=0$ and $g_1(M)=\min\{|C|: C\in \mathcal{C}(M)\},$ the \textit{girth} $g(M)$ of $M$; (3) $g_{k}(M)$ exists if and only if $k\leq |X|-r$, and (4) $g_{k+1}(M)>g_{k}(M)$ for $k< |X|-r$.

\vskip0.2cm

The \textit{dual} of a matroid $M=(X, rk)$, denoted by $M^{*}$, is defined by the matroid $(X,rk^{*})$, where $rk^{*}(A)=|A|+rk(X\setminus A)-r$ for $A\subseteq X$.
In fact, $A\in \mathcal{G}(M)$ if and only if $\overline{A}=X-A\in \mathcal{F}(M^*)$ and $f_k(M)+g_k(M^*)=|X|$ for $k\leq r$.

\begin{theorem}\label{d2}
	Let $M=(X,rk)$ be a matroid, $t$ be a real number and $\alpha =|X|-k-1$.
\begin{enumerate}
\item If $r\leq |X|-2$, then
 \begin{enumerate}
\item[(1)] for any $k$ with $k\geq r-g_2(M)+3$,
	\begin{align*}	
[x^k]T_M(x,t)=\binom{\alpha}{r-k}-\sum_{j=0}^{r-k}\binom{\alpha-j}{r-k-j}|\mathcal{C}_{j}|
   +(t-1)\sum_{j=0}^{r-k+1}\binom{\alpha-j}{r-k-j+1}|\mathcal{C}_{j}|.
	\end{align*}
    \item[(2)] for any $k$ with $k\geq r-g_2(M)+1$,
	\begin{align*}	
[x^k]T_M(x,1)=\binom{\alpha}{r-k}-\sum_{j=0}^{r-k}\binom{\alpha-j}{r-k-j}|\mathcal{C}_{j}|.
	\end{align*}
\end{enumerate}
\item If $r\geq 2$, then
\begin{enumerate}
\item[(3)] for any $k$ with $k\geq f_2(M)-r+3$,
	\begin{align*}	
[y^k]T_M(t,y)=\binom{\alpha}{r-1}-\sum_{j=r+k}^{|X|}\binom{\alpha-j}{r-1}|\mathcal{H}_{j}|
   +(t-1)\sum_{j=r+k-1}^{|X|}\binom{\alpha-j}{r-2}|\mathcal{H}_{j}|.
	\end{align*}
\item[(4)] for any $k$ with $k\geq f_2(M)-r+1$,
	\begin{align*}	
[y^k]T_M(1,y)=\binom{\alpha}{r-1}-\sum_{j=r+k}^{|X|}\binom{\alpha-j}{r-1}|\mathcal{H}_{j}|.
	\end{align*}
\end{enumerate}
\end{enumerate}

\end{theorem}


We further establish the following necessary and sufficient conditions on the coefficients of the more higher-order terms.

\begin{theorem}\label{d3} 
Let $M=(X,rk)$ be a matroid and $t$ be a real number with $0\leq t< 1$.
\begin{enumerate}
\item If $r< |X|$, then
\begin{enumerate}
\item[(1)] $k\geq r-g_1(M)+2$ if and only if
  \begin{align*}	
 [x^k]T_M(x,t)=\binom{\alpha}{r-k}.
	\end{align*}
\item[(2)] $k\geq r-g_1(M)+1$ if and only if
  \begin{align*}	
 [x^k]T_M(x,1)=\binom{\alpha}{r-k}.
	\end{align*}
\end{enumerate}
\item If $r>0$, then
\begin{enumerate}
\item[(3)] $k\geq f_1(M)-r+2$ if and only if
	\begin{align*}	
[y^k]T_M(t,y)=\binom{\alpha}{r-1}.
	\end{align*}
\item[(4)] $k\geq f_1(M)-r+1$ if and only if
	\begin{align*}	
[y^k]T_M(1,y)=\binom{\alpha}{r-1}.
	\end{align*}
\end{enumerate}
\end{enumerate}
\end{theorem}

These two theorems generalize several known results. The special case when $t=0$ of Theorem \ref{d2} (1)
can be further simplified to obtain Theorem \ref{chra}, which generalizes Theorem \ref{TK} of Teo and Koh in \cite{Teo} from graphs to matroids. The special case when $t=0$ of Theorem \ref{d3} (1)
establishes necessary and sufficient conditions on the coefficients of characteristic polynomial (Theorem \ref{Chiff}), chromatic polynomial (Corollary \ref{graphChrff}) and flow polynomial (Corollary \ref{graphflowC}). Theorem \ref{d2} (4) can be viewed as a generalization Theorem \ref{CG} of Chen and Guo in \cite{Chen} from graphs to matroids, we also obtain the graphical version of Theorem \ref{d2} (2), see Corollary \ref{dcg}.

The paper is organized as follows. In Section 2 we prove Theorem \ref{d2} and in Section 3, we prove \ref{d3}. In Section 4, we discuss the application of Theorems \ref{d2} and \ref{d3} when $t=0$ and $t=1$. We also prove the unimodality of these coefficients in Theorem \ref{d2} (1) and (3) when $t\leq 1$.

\section{Proof of Theorem \ref{d2}}

From the definition of the dual we can deduce the well-known fact
\begin{align}\label{dual}
	T_M(x,y)=T_{M^{*}}(y,x)
\end{align}
for any matroid $M$. The following lemma is obtained by unit translation of variables.

\begin{lemma}\label{genematroid}
	Let $M=(X,rk)$ be a matroid. Then
	\begin{align}\label{Indset}
		[x^k]T_M(x,y)=\sum_{A\subseteq X}(-1)^{r-rk(A)-k}\binom{r-rk(A)}{k} (y-1)^{|A|-rk(A)},
	\end{align}
and
\begin{align}\label{Spanset}
[y^k]T_M(x,y)=\sum_{A\subseteq X}(-1)^{|A|-rk(A)-k}\binom{|A|-rk(A)}{k} (x-1)^{r-rk(A)}.
	\end{align}
\end{lemma}

\begin{proof}
By Equation (\ref{TutteP}), we have
	\begin{align*}
		T_M(x,y+1)&=\sum_{A\subseteq X}(x-1)^{r-rk(A)}y^{|A|-rk(A)}\\
		&=\sum_{A\subseteq X}\sum_{k=0}^{r-rk(A)}(-1)^{r-rk(A)-k}\binom{r-rk(A)}{k} x^{k}y^{|A|-rk(A)}\\
		&=\sum_{k=0}^{r}x^{k}\sum_{A\subseteq X}(-1)^{r-rk(A)-k}\binom{r-rk(A)}{k} y^{|A|-rk(A)}.
	\end{align*}
Therefore Equation (\ref{Indset}) holds. Equation (\ref{Spanset}) can be obtained by Equations (\ref{Indset}) and (\ref{dual}).
\end{proof}

Let $M=(X,rk)$ be a matroid.
A subset $A\subseteq X$ is \textit{independent} if $rk(A)=|A|$. Let $\tau_i(M)$ denote the number of the independent sets with $i$ elements in $M$. A subset $A\subseteq X$ is called to be \textit{spanning} if $rk(A)=r$. Let
$\sigma_i(M)$ denote the number of the spanning sets with $i$ elements in $M$.

In particular, if $y=1$ in Equation (\ref{Indset}), then
	\begin{align}\label{Indset1}
		[x^k]T_M(x,1)=\sum^{r-k}_{i=0}(-1)^{r-i-k}\binom{r-i}{k}\tau_i(M).
	\end{align}
If $x=1$ in Equation (\ref{Spanset}), then
\begin{align}
[y^k]T_M(1,y)=\sum^{|X|}_{i=r+k}(-1)^{i-r-k}\binom{i-r}{k} \sigma_i(M).
	\end{align}

The following combinatorial identity is not new and can be derived from Equation (4) on page 8 in \cite{Riordan}.
\begin{lemma}\label{identical1}
	Let $m$ be a nonnegative integer. Then, for any nonnegative integers $p$ and $k$ with $p\geq k$,
	$$\sum_{i=0}^{p-k}(-1)^{p-i-k}\binom{p-i}{k}\binom{m}{i}=\binom{m-k-1}{p-k}.$$	
\end{lemma}


We also need the following simple lemma.

\begin{lemma}\label{boundrank}
	 Let $M=(X,rk)$ be a matroid and $k\leq |X|-r$.  If $rk(A)\leq g_k(M)-k-1$ for $A\subseteq X$, then $|A|-rk(A)\leq k-1$.
\end{lemma}
\begin{proof}
 If $|A|-rk(A)\geq k$, then there must be a subset $D\subseteq A$ such that $rk(D)=|D|-k$.
By the non-decreasing of the rank function, we have $$|D|=rk(D)+k\leq rk(A)+k\leq g_k(M)-1,$$ which is a contradiction to the definition of $g_k(M)$.
\end{proof}

Now we are in a position to prove Theorem \ref{d2}.
\begin{proof}
\noindent(1). By Equation (\ref{Indset}) in Lemma \ref{genematroid} we only need to consider $A\subseteq X$ with $r-rk(A)\geq k$. For any $A\subseteq X$ with $r-rk(A)\geq k$, since $k\geq r-g_2(M)+3$, we have $rk(A)\leq g_2(M)-3$. By Lemma \ref{boundrank}, we have $|A|-rk(A)\leq 1$. Set
\begin{align*}
\mathcal{A}_{0}&=\{A\subseteq X: |A|=rk(A), rk(A)\leq r-k\},\\
\mathcal{A}_{1}&=\{A\subseteq X: |A|=rk(A)+1, rk(A)\leq r-k\}.
\end{align*}
Therefore, either $A\in \mathcal{A}_{0}$ or $A\in \mathcal{A}_{1}$ for all $A\subseteq X$ satisfying $r-rk(A)\geq k$. So we have
\begin{align}\label{CX}	
[x^k]T_M(x,y+1)=& \sum_{A\subseteq X, r-rk(A)\geq k}(-1)^{r-rk(A)-k}\binom{r-rk(A)}{k} y^{|A|-rk(A)}\nonumber \\
	=&\sum_{A\in \mathcal{A}_{0}}(-1)^{r-|A|-k}\binom{r-|A|}{k}+y\sum_{A\in \mathcal{A}_{1}}(-1)^{r-|A|+1-k}\binom{r-|A|+1}{k}\nonumber\\
    =&\sum_{A\subseteq X, r-rk(A)\geq k}(-1)^{r-|A|-k}\binom{r-|A|}{k}-\sum_{A\in \mathcal{A}_{1}}(-1)^{r-|A|-k}\binom{r-|A|}{k}\nonumber\\
    &+y\sum_{A\in \mathcal{A}_{1}}(-1)^{r-|A|+1-k}\binom{r-|A|+1}{k}.
\end{align}

\noindent \textbf{Claim 1.} In Equation (\ref{CX}), the condition $r-rk(A)\geq k$ can be discarded.
\begin{proof}
In $\sum_{A\subseteq X, r-rk(A)\geq k}(-1)^{r-|A|-k}\binom{r-|A|}{k}$, suppose that $|A|=i$, we only need consider such $A$'s with $0\leq i\leq r-k$. If $i\leq r-k$, then $r-rk(A)\geq r-|A|\geq k$.
\end{proof}
\noindent \textbf{Claim 2.}
\begin{align*}
|\{A: A\in \mathcal{A}_{1}~\text{and}~|A|=i\}|=\sum_{C\in \mathcal{C}(M)}\binom{|X|-|C|}{i-|C|}=\sum_{j=0}^{i}\binom{|X|-j}{i-j}|\mathcal{C}_{j}|.
\end{align*}
\begin{proof}
Let $A\in \mathcal{A}_{1}$.
	Suppose that there exists $A'\subseteq X$ with $|A'|=|A|$ such that $|A'|-rk(A')\geq 2$. Therefore,
	\[|A|-rk(A)= 1<2\leq |A'|-rk(A')=|A|-rk(A'). \]
	Then $rk(A')<rk(A)$. Therefore $r-rk(A')\geq r-rk(A)\geq k$. Since $k\geq r-g_2(M)+3$, we have $rk(A')\leq g_2(M)-3$. By Lemma \ref{boundrank}, we have $|A'|-rk(A')\leq 1$, a contradiction. Therefore, for any $A'\subseteq X$ with $|A'|=|A|$, we have $|A'|-rk(A')\leq 1$. For any $A'\subseteq X$ with $|A'|=|A|$, if $|A'|-rk(A')= 1$, then $rk(A')=rk(A)$ and
	$r-rk(A')=r-rk(A)\geq k$. Therefore, the number of $A'$'s with $|A'|=|A|=i$ such that $|A'|-rk(A')= 1$ is equal to  $\sum_{C\in \mathcal{C}(M)}\binom{|X|-|C|}{i-|C|}$.
		
	Thus the claim holds.
\end{proof}
Applying \textbf{Claim 1}, \textbf{Claim 2}, exchanging summations and Lemma \ref{identical1} step by step, we have
\begin{align}\label{CX1}	
[x^k]T_M(x,y+1)=&\sum_{i=0}^{r-k}(-1)^{r-i-k}\binom{r-i}{k}\binom{|X|}{i}-\sum_{A\in \mathcal{A}_{1}}(-1)^{r-|A|-k}\binom{r-|A|}{k}\nonumber\\
	&+y\sum_{A\in \mathcal{A}_{1}}(-1)^{r-|A|+1-k}\binom{r-|A|+1}{k}\nonumber\\
   =&\sum_{i=0}^{r-k}(-1)^{r-i-k}\binom{r-i}{k}\binom{|X|}{i}\nonumber\\
   &-\sum_{i=0}^{r-k}(-1)^{r-i-k}\binom{r-i}{k}\sum_{j=0}^{i}\binom{|X|-j}{i-j}|\mathcal{C}_{j}|\nonumber\\
	&+y\sum_{i=0}^{r+1-k}(-1)^{r-i+1-k}\binom{r-i+1}{k}\sum_{j=0}^{i}\binom{|X|-j}{i-j}|\mathcal{C}_{j}|\nonumber\\
   =&\sum_{i=0}^{r-k}(-1)^{r-i-k}\binom{r-i}{k}\binom{|X|}{i}\nonumber\\
   &-\sum_{j=0}^{r-k}\sum_{i=j}^{r-k}(-1)^{r-i-k}\binom{r-i}{k}\binom{|X|-j}{i-j}|\mathcal{C}_{j}|\nonumber\\
	&+y\sum_{j=0}^{r-k+1}\sum_{i=j}^{r-k+1}(-1)^{r-i+1-k}\binom{r-i+1}{k}\binom{|X|-j}{i-j}|\mathcal{C}_{j}|\nonumber \\
   =& \binom{|X|-k-1}{r-k}-\sum_{j=0}^{r-k}\binom{|X|-k-1-j}{r-k-j}|\mathcal{C}_{j}|\nonumber\\
   &+y\sum_{j=0}^{r-k+1}\binom{|X|-k-1-j}{r-k-j+1}|\mathcal{C}_{j}|.\nonumber
\end{align}

\noindent (2). We start from Equation (\ref{Indset1}), we only consider independent sets $A$ with $i$ elements such that $r-i\geq k$.
If $k\geq r-g_2(M)+1$, then $r-i\geq k\geq r-g_2(M)+1$, that is, $ i\leq g_2(M)-1$. For any $A\subseteq X$, if $|A|\leq g_2(M)-1$, then $|A|-rk(A)\leq 1$. Otherwise $k=|A|-rk(A)\geq 2$, then $g_2(M)\leq g_k(M)\leq |A|$, a contradiction.

\noindent \textbf{Claim 3.}
\[\tau_{i}(M)=\binom{|X|}{i}-\sum_{C\in \mathcal{C}(M)}\binom{|X|-|C|}{i-|C|}=\binom{|X|}{i}-\sum_{j=0}^{i}\binom{|X|-j}{i-j}|\mathcal{C}_{j}|.\]
\begin{proof}
For any independent set $A$,
	suppose that there exists $A'\subseteq X$ with $|A'|=|A|$ such that $|A'|-rk(A')\geq 2$. Then
	\[rk(A')\leq |A'|-2=|A|-2=rk(A)-2. \]
	Therefore $r-rk(A')\geq r-rk(A)+2\geq k+2$. Since $k\geq r-g_2(M)+1$, we have $rk(A')\leq g_2(M)-3$. By Lemma \ref{boundrank}, we have $|A'|-rk(A')\leq 1$, a contradiction. Therefore, for any $A'\subseteq X$ with $|A'|=|A|$, we have $|A'|-rk(A')\leq 1$. Therefore, the number of $A'$'s with $|A'|=|A|=i$ such that $A'$ is an independent set is equal to  $\binom{|X|}{i}-\sum_{C\in \mathcal{C}(M)}\binom{|X|-|C|}{i-|C|}$.
	Thus the claim holds.
\end{proof}
By Equation (\ref{Indset1}), \textbf{Claim 3}, exchanging summands and Lemma \ref{identical1}, we have
\begin{align}\label{CX0}	
[x^k]T_M(x,1)=&\sum_{i=0}^{r-k}(-1)^{r-i-k}\binom{r-i}{k}\tau_{i}(M) \nonumber\\
	=&\sum_{i=0}^{r-k}(-1)^{r-i-k}\binom{r-i}{k}\binom{|X|}{i}\nonumber\\
    -&\sum_{i=0}^{r-k}(-1)^{r-i-k}\binom{r-i}{k}\sum_{j=0}^{i}\binom{|X|-j}{i-j}|\mathcal{C}_{j}| \nonumber\\
   =&\sum_{i=0}^{r-k}(-1)^{r-i-k}\binom{r-i}{k}\binom{|X|}{i}\nonumber\\
   -&\sum_{j=0}^{r-k}\sum_{i=j}^{r-k}(-1)^{r-i-k}\binom{r-i}{k}\binom{|X|-j}{i-j}|\mathcal{C}_{j}|\nonumber\\
   =&\binom{|X|-k-1}{r-k}-\sum_{j=0}^{r-k}\binom{|X|-k-1-j}{r-k-j}|\mathcal{C}_{j}|.   \nonumber
\end{align}

\noindent (3) and (4). Note that $g_k(M)+f_k(M^{*})=|X|$ and $C\in \mathcal{C}_j(M^{*})$ if and only if $X\setminus C\in \mathcal{H}_{|X|-j}(M)$. By Equation (\ref{dual}), (3) and (4) follow from (1) and (2) respectively.
\end{proof}

\section{Proof of Theorem \ref{d3}}

Let $M=(X, rk)$ be a matroid. A maximal independent set in a matroid $M$ is called a \textit{base} of $M$. Given a total order on $X$ and let $B$ be a base.  For $e \in X\setminus B$, $e$ is called to be \textit{externally active} with respect to $B$ if there exists no an element $f\in B$ which is smaller than $e$ such that $B-f+e$ is a base. For $e \in B$, $e$ is called to be  \textit{internally active} with respect to $B$ if there exists no an element $f\in X\setminus B$ which is smaller than $e$ such that $B-e+f$ is a base. The number of externally  (resp. internally) active elements with respect to $B$ is called \textit{external}
(resp. \textit{internal}) \textit{activity} of $B$, denoted by $\epsilon(B)$ (resp. $\iota(B)$).
It is well known that the Tutte polynomial of a matroid $M$ can be written as
\begin{eqnarray}
T_{M}(x,y)=\sum_{B\in \mathcal{B}(M)}x^{\iota(B)}y^{\epsilon(B)},
\end{eqnarray}
where $\mathcal{B}(M)$ is the set of all bases of $M$, which was originally introduced for graphs by Tutte \cite{Tutte} and later extended to matroids by Crapo \cite{Crapo}.  
An element $e$ is called to be a \textit{loop} (resp. \textit{coloop}) if $rk(e)=0$ (resp. $rk(M\setminus e)=rk(M)-1$). Then the Tutte polynomial of a matroid $M$ satisfies the following recursive relation.
$T_M(x,y)=1$ if $X=\emptyset$, otherwise, for $e\in X$,
\begin{eqnarray}
T_M(x, y)=\left \{
\begin{array}{ll}
xT_{M/e}(x, y),  &\text{ if } e \text{ is a coloop};\\
yT_{M\setminus e}(x, y),  &\text{ if } e \text{ is a loop};\\
T_{M\setminus e}(x, y)+T_{M/e}(x, y), &\text{ otherwise,}
\end{array}
\right.
\end{eqnarray}
where $M\setminus e$ and $M/e$ denote the deletion and contraction of $e$ from $M$, respectively. We need the following two lemmas to prove Theorem \ref{d3}.

\begin{lemma}\label{MO}
Let $M_1$ and $M_2$ be two matroids on same ground set $X$ such that $\mathcal{B}(M_1)\subseteq\mathcal{B}(M_2)$. Then, for any integer $k$ and any real number $t$ with $0\leq t\leq 1$,
\begin{eqnarray}
&[x^k]T_{M_1}(x,t)\leq [x^k]T_{M_2}(x,t),\label{fi}\\
&[y^k]T_{M_1}(t,y)\leq [y^k]T_{M_2}(t,y).\label{se}
\end{eqnarray}
\end{lemma}

\begin{proof}
We only prove (\ref{fi}),since (\ref{se}) can be obtained from (\ref{fi}) by the fact that $B\in \mathcal{B}(M)$ if and only if $X-B\in \mathcal{B}(M^*)$ using duality (Equation (\ref{dual})). For convenience, let $l(M)$ and $c(M)$ denote the number of loops and coloops for a matroid $M$. There are two cases.

\noindent{\bf Case 1.} $l(M_1)+c(M_1)=|X|$. In this case, $M_1$ contains a unique base $B$ which consists of all coloops. For any fixed order on $X$, each loop is externally active and each coloop is internally active. Therefore, $T_{M_1}(x,t)=t^{l(M_1)}x^{c(M_1)}$. Since $\mathcal{B}(M_1)\subseteq\mathcal{B}(M_2)$, we have $B\in \mathcal{B}(M_2)$ and $l(M_2)\leq l(M_1)$. Now we order elements of $B$ first then elements of $X-B$. Thus every element in $B$ is internally active with respect to $B$ in $M_2$, every loop in $M_2$ is remains externally active with respect to $B$, and non-loop elements in $X-B$ are not external active with respect to $B$ in $M_2$. It follows that $[x^{c(M_1)}]T_{M_2}(x,t)\geq t^{l(M_2)}$. Since $0\leq t\leq 1$, we have $[x^k]T_{M_1}(x,t)\leq [x^k]T_{M_2}(x,t)$.

\noindent{\bf Case 2.} $l(M_1)+c(M_1)<|X|$. We prove this case by induction on $|X|$. If $|X|= 2$, then both $M_1$ and $M_2$ are circuits with two elements and hence (\ref{fi}) holds. Assume that (\ref{fi}) holds for any matroid on the ground set of size strictly less than $|X|$ with $|X|\ge 3$. Now suppose $|X|$. Since $l(M_1)+c(M_1)<|X|$, there exists an element $e\in X$
which is neither a loop nor a coloop in $M_1$.
Since $\mathcal{B}(M_1)\subseteq\mathcal{B}(M_2)$, each of the following holds:
\begin{itemize}
  \item [(i)] $e$ is neither a loop nor a coloop in $M_2$;
  \item [(ii)] $\mathcal{B}(M_1\setminus e)\subseteq\mathcal{B}(M_2\setminus e)$;
   \item [(iii)] $\mathcal{B}(M_1/ e)\subseteq\mathcal{B}(M_2/ e)$.
\end{itemize}
Thus, by induction hypothesis and Case 1, we have
\begin{align*}	
    [x^k]T_{M_1}(x,t)& =[x^k]T_{M_1\setminus e}(x,t)+[x^k]T_{M_1/ e}(x,t)\\
    &\leq [x^k]T_{M_2\setminus e}(x,t)+[x^k]T_{M_2/ e}(x,t)\\
    &=[x^k]T_{M_2}(x,t).
\end{align*}

This completes the proof of Lemma \ref{MO}.  
\end{proof}

Recall that the matroid on a finite set of $n$ elements in which every subset of size at most $r$ is independent is a \textit{uniform matroid}, denoted by $U_{r,n}$, where $0\leq r\leq n$.
The Tutte polynomial of uniform matroids was given in \cite{MerinoRa} (see Formula (2.24)).

\begin{lemma}\cite{MerinoRa}\label{TutteU}
\[
T_{U_{r,n}}(x, y) = \sum_{i=1}^r \binom{n-i-1}{n-r-1}x^i + \sum_{j=1}^{n-r}\binom{n-j-1}{r-1}y^j
\]
if \( 0 < r < n \), and \( T_{U_{n,n}}(x, y) = x^n \) and \( T_{U_{0,n}}(x, y) = y^n \).
\end{lemma}

We now present the proof of Theorem \ref{d3}.

\begin{proof} We only prove (1) and (2), since (3) and (4) can be obtained from (1) and (2) by duality.
If $r=0$, then
$$T_{M}(x,y)=y^{|X|}.$$
If $r=|X|-1$, then
$$T_{M}(x,y)=x^{|X|-1}+x^{|X|-2}+\cdots +x^{|X|-|C|+1}+x^{|X|-|C|}y,$$
where $C$ denotes the unique circuit in $M$.
It is easy to check that (1) and (2) hold for $r=0$ and $r=|X|-1$. We now assume that $1\leq r\leq |X|-2$.

We first establish the necessity of (1) and (2) by Theorem \ref{d2}. Note that $k\geq r-g(M)+2\geq r-g_2(M)+3$. If $j\leq r-k+1$, then $j\leq g(M)-1$ Which implies that $\mathcal{C}_{j}=\emptyset.$ Thus, by (1) of Theorem \ref{d2}, we have
	\begin{align*}	
[x^k]T_M(x,t)=\binom{\alpha}{r-k}.
	\end{align*}
Similarly, if $k\geq r-g(M)+1$ and $j\leq r-k$, then $ j\leq g(M)-1$ which implies $\mathcal{C}_{j}=\emptyset.$ Thus,  by (2) of Theorem \ref{d2}, we have
	\begin{align*}	
[x^k]T_M(x,1)=\binom{\alpha}{r-k}.
	\end{align*}

We now prove the sufficiency of (1) and (2).

\noindent {\bf Case 1.}  $M$ contains a loop $e$. Clearly, $g(M)=1$ and  $T_M(x,t)=tT_{M\setminus e}(x,t)$. Since $r\leq (|X|-1)-1$,
we, by Lemmas \ref{MO} and \ref{TutteU}, have
$$[x^k]T_{M\setminus e}(x,t)\leq [x^k]T_{U_{r,|X|-1}}(x,t)=\binom{|X|-1-k-1}{r-k}\leq \binom{|X|-k-1}{r-k}.$$  Therefore $$[x^k]T_{M}(x,t)=t[x^k]T_{M\setminus e}(x,t)<\binom{|X|-k-1}{r-k}$$ if $0\leq t< 1$ and $r-k\geq 0$, and $$[x^k]T_{M}(x,1)=[x^k]T_{M\setminus e}(x,1)\leq \binom{|X|-1-k-1}{r-k}<\binom{|X|-k-1}{r-k}$$ if $r-k\geq 1$. This contradicts $[x^k]T_{M}(x,t)=\binom{|X|-k-1}{r-k}$. Hence, $r-k\leq -1$ if $0\leq t< 1$, and $r-k\leq 0$ if $t=1$, that is, $k\geq r+1=r-g(M)+2$ if $0\leq t< 1$, and $k\geq r-g(M)+1$ if $t=1$.

\noindent {\bf Case 2.} $M$ contains no loops. We prove this case by induction on $|X|$. If $|X|=3$, then we have $r=1$, $g(M)=2$ and $T_M(x,t)=t^2+t+x$. Clearly, the sufficiency of (1) and (2) hold.
We assume that the sufficiency of (1) and (2) for any matroid on the ground set of size strictly less than $l\geq 4$. Now suppose that $|X|=l$.
Since $|X|\geq r+2$ and $M$ contains no loops, $M$ must contain a minimum circuit with at least two elements. Let $e$ be an element in this minimum circuit. Clearly, $e$ is neither a loop nor a coloop. By Lemmas \ref{MO} and \ref{TutteU}, we have $$[x^k]T_{M/ e}(x,t)\leq [x^k]T_{U_{r-1,|X|-1}}(x,t)=\binom{|X|-1-k-1}{r-1-k}$$ and
$$[x^k]T_{M\setminus e}(x,t)\leq [x^k]T_{U_{r,|X|-1}}(x,t)=\binom{|X|-1-k-1}{r-k}.$$
Then
\begin{align*}
[x^k]T_M(x,t)&=[x^k]T_{M/ e}(x,t)+[x^k]T_{M\setminus e}(x,t)\\
&\leq \binom{|X|-1-k-1}{r-1-k}+\binom{|X|-1-k-1}{r-k}\\
&=\binom{|X|-k-1}{r-k}\\
&=[x^k]T_M(x,t).
\end{align*}
Therefore, $$[x^k]T_{M/ e}(x,t)=\binom{|X|-1-k-1}{r-1-k}. $$
If $M/e$ contains no loops, then, by the induction hypothesis, we have
\begin{eqnarray}\label{kin}
k\geq\left \{
\begin{array}{ll}
rk(M/e)-g(M/e)+2,  &\text{ if } 0\leq t<1;\\
rk(M/e)-g(M/e)+1, &\text{ if } t=1.
\end{array}
\right.
\end{eqnarray}
Otherwise, Inequation (\ref{kin}) can be obtained by replacing $M$ with $M/ e$ in {\bf Case 1.} Note that $rk(M/e)=rk(M)-1$ and $g(M/e)=g(M)-1$.
Thus,
\begin{eqnarray*}
k\geq\left \{
\begin{array}{ll}
rk(M)-g(M)+2,  &\text{ if } 0\leq t<1;\\
rk(M)-g(M)+1, &\text{ if } t=1.
\end{array}
\right.
\end{eqnarray*}
\end{proof}

\section{Applications}

In this section, we discuss applications of Theorems \ref{d2} and \ref{d3}.

\subsection{$t=0$}

The \textit{characteristic polynomial} $\chi_M(\lambda)$ of a matroid $M$, a special case of Tutte polynomial, can be defined by
\begin{equation}\label{cha}
\chi_M(\lambda)=(-1)^{r}T_M(1-\lambda,0).
\end{equation}

Let $G=(V,E)$ denote a graph of order $n$ and size $m$, i.e. $|V|=n$ and $|E|=m$. For any $A\subseteq E$, let $c(A)$ denote the number of components of the spanning subgraph $(V,A)$. In particular, we write $c=c(E)$ for simplicity.
For a graph $G=(V,E)$, the matroid on the ground set $E$ with the rank function $rk(A)=|V|-c(A)$ for any $A\subseteq E$ is called the \textit{cycle matroid} of $G$, denoted by $M(G)$. The Tutte polynomial $T_G(x,y)$ of a graph $G$ can be defined by $T_{M(G)}(x,y)$, that is,
\begin{equation*}
T_G(x,y)=T_{M(G)}(x,y).
\end{equation*}
The classical \textit{chromatic polynomial} $\chi_G(\lambda)$ and \textit{flow polynomial} $F_G(\lambda)$ of a graph $G$ can be expressed by the Tutte polynomial of $G$ as follows:
\begin{align*}
\chi_G(\lambda)&=(-1)^{n-c}\lambda^{c}T_G(1-\lambda,0);\\
F_G(\lambda)&=(-1)^{m-n+c}T_G(0,1-\lambda).
\end{align*}
Clearly, $\chi_G(\lambda)=\lambda^{c}\ \chi_{M(G)}(\lambda).$

For a connected simple graph $G$ of order $n$, Whitney \cite{Whitney}  proved the classical result that $(-1)^k[\lambda^{n-k}]\chi_G(\lambda)$
counts the number of $k$-subset of edges containing no broken cycles for $0\leq k<n$.
In 1994, Teo and Koh \cite{Teo} further proved:
\begin{theorem}\emph{\cite{Teo}}\label{TK}
Let $G$ be a graph of order $n$ and size $m$ with girth $g\geq 4$.
Then, for each integer $k$ with $g\leq k\leq \lceil \frac{3g}{2} \rceil -3$,
\[(-1)^k[\lambda^{n-k}]\chi_G(\lambda)=\binom{m}{k}-\sum_{i=g}^{k+1}\phi_{i}(G)\binom{m-i+1}{k-i+1},\]
where $\phi_{i}(G)$ denotes the number
of cycles of length $i$ in $G$.
\end{theorem}

As an application of Theorem \ref{d2}, we will obtain the matroid version of Theorem \ref{TK}, as presented in Theorem \ref{chra} below. In the process, the following identity is required, which is not new and can be derived from Equation (3b) on Page 8 in \cite{Riordan}.

\begin{lemma}\label{identical2}
	Let $m$ be a nonnegative integer. Then, for any nonnegative integers $p$ and $k$ with $k\leq q$,
\[\sum_{i=0}^{q}\binom{m-i-1}{q-i}\binom{i}{k}=\binom{m}{q-k}.\]
\end{lemma}

\begin{theorem}\label{chra}
Let $M=(X,rk)$ be a matroid.  Then, for
	$k\geq r-g_2(M)+3$,
	\begin{align*}		
(-1)^{r+k}[x^k]\chi_M(\lambda)=\binom{|X|}{r-k}-\sum_{j=0}^{r-k+1}\binom{|X|-j+1}{r-k-j+1}|\mathcal{C}_{j}|.
	\end{align*}
\end{theorem}

\begin{proof}
Let $T_M(x,0)=\sum_{i=0}^{r} a_{i}x^{i}$. Then
\begin{align*}	
T_M(1-x,0)=\sum_{i=0}^{r} a_{i}(1-x)^{i}=\sum_{i=0}^{r} a_{i}\sum_{j=0}^{i}\binom{i}{j}(-x)^{j}=\sum_{j=0}^{r}(-1)^{j}x^{j}\sum_{i=j}^{r}a_{i}\binom{i}{j}.
\end{align*}
Clearly, $$[x^k]T_M(1-x,0)=(-1)^{k}\sum_{i=k}^{r}a_{i}\binom{i}{k}.$$
For $i\geq k\geq r-g_2(M)+3$, by Theorem \ref{d2} (1), we have
\[
\begin{aligned}
a_i &= \binom{|X|-i-1}{r-i} - \sum_{j=0}^{r-i} \binom{|X|-i-1-j}{r-i-j} |\mathcal{C}_j| - \sum_{j=0}^{r-i+1} \binom{|X|-i-1-j}{r-i-j+1} |\mathcal{C}_j| \\
&= \binom{|X|-i-1}{r-i} - \sum_{j=0}^{r-i} \binom{|X|-i-1-j}{r-i-j} |\mathcal{C}_j| - \sum_{j=0}^{r-i} \binom{|X|-i-1-j}{r-i-j+1} |\mathcal{C}_j| \\
&\quad - \binom{|X|-i-1-(r-i+1)}{r-i+1-(r-i+1)} |\mathcal{C}_{r-i+1}| \\
&= \binom{|X|-i-1}{r-i} - \sum_{j=0}^{r-i} \binom{|X|-i-1-j+1}{r-i-j+1} |\mathcal{C}_j| - |\mathcal{C}_{r-i+1}| \\
&= \binom{|X|-i-1}{r-i} - \sum_{j=0}^{r-i} \binom{|X|-i-j}{r-i-j+1} |\mathcal{C}_j| \\
&\quad - \binom{|X|-i-(r-i+1)}{r-i+1-(r-i+1)} |\mathcal{C}_{r-i+1}| \\
&= \binom{|X|-i-1}{r-i} - \sum_{j=0}^{r-i+1} \binom{|X|-i-j}{r-i-j+1} |\mathcal{C}_j|.
\end{aligned}
\]
Therefore $[x^k]T_M(1-x,0)$
\begin{align*}	
    =&(-1)^{k}\sum_{i=k}^{r}\left(\binom{|X|-i-1}{r-i}-\sum_{j=0}^{r-i+1}\binom{|X|-i-j}{r-i-j+1}|\mathcal{C}_{j}|\right)\binom{i}{k}\\	=&(-1)^{k}\left(\sum_{i=k}^{r}\binom{|X|-i-1}{r-i}\binom{i}{k}-\sum_{i=k}^{r}\sum_{j=0}^{r-i+1}\binom{|X|-i-j}{r-i-j+1}\binom{i}{k}|\mathcal{C}_{j}|\right)\\
=&(-1)^{k}\left(\sum_{i=k}^{r}\binom{|X|-i-1}{r-i}\binom{i}{k}-\sum_{j=0}^{r-k+1}\sum_{i=k}^{r-j+1}\binom{|X|-j-i}{r-j-i+1}\binom{i}{k}|\mathcal{C}_{j}|\right)\\
=&(-1)^{k}\left(\binom{|X|}{r-k}-\sum_{j=0}^{r-k+1}\binom{|X|-j+1}{r-k-j+1}|\mathcal{C}_{j}|\right),
\end{align*}
where the last equality follows from Lemma \ref{identical2}. By (\ref{cha}), Theorem \ref{chra} holds.
\end{proof}

To show that Theorem \ref{chra} generalizes Theorem \ref{TK}, we first prove the following lemma.

\begin{lemma}\label{girth}
	Let $G$ be a graph satisfying $m-n+c\geq 2$ with girth $g$. Then
	$$g_2(M(G))\geq \frac{3g}{2}.$$	
\end{lemma}
\begin{proof}
Let $E'$ be an edge subset realizing $g_2(M(G))$. Then $|E'|-rk(E')=2$ implying the graph induced by $E'$ has two distinct cycles $C_{1}$ and $C_{2}$. Assume that $g_2(M(G))<\frac{3g}{2}$. Then $|E(C_{1})\setminus E(C_{2})|\leq|E'\setminus E(C_{2})|<\frac{3g}{2}-g=\frac{g}{2}$. Similarly $|E(C_{2})\setminus E(C_{1})|< \frac{g}{2}$. Let $E''=E(C_{1})\vartriangle E(C_{2})$, the symmetric difference of the two cycles. Then $|E''|<g$ contradicting to the fact that the graph induced by $E'$ has a cycle.
\end{proof}

Suppose $k\leq \lceil \frac{3g}{2} \rceil -3$.
By Lemma \ref{girth}, we have $n-k\geq n-\lceil \frac{3g}{2} \rceil +3\geq n-g_2(M(G)) +3$. Then $$n-c-k\geq n-c-g_2(M(G))+3=rk(M(G))-g_2(M(G))+3.$$
By Theorem \ref{chra}, we have
\begin{align*}		
[\lambda^{n-k}]\chi_G(\lambda)=&[\lambda^{n-c-k}]\chi_{M(G)}(\lambda)\\
=&(-1)^{k}\left(\binom{m}{k}-\sum_{j=0}^{k+1}\binom{m-j+1}{k-j+1}|\mathcal{C}_{j}|\right).
	\end{align*}

Likewise, applying Theorem \ref{d3}, we deduce the following result.

\begin{theorem}\label{Chiff}
Let $M=(X,rk)$ be a matroid with $r< |X|$.
 Then $g(M)\geq r-k+2$ if and only if
	\begin{align*}		
[x^k]\chi_M(\lambda)=(-1)^{r+k}\binom{|X|}{r-k}.
	\end{align*}
\end{theorem}

\begin{proof}
Let $T_M(x,0)=\sum_{i=1}^{r} a_{i}x^{i}$. By Lemmas \ref{MO} and \ref{TutteU}, we have $a_i\leq \binom{|X|-i-1}{r-i}$ for any $1\leq i\leq r$.
Since
\begin{align*}	
T_M(1-x,0)=\sum_{i=1}^{r} a_{i}(1-x)^{i}=\sum_{i=1}^{r} a_{i}\sum_{j=0}^{i}\binom{i}{j}(-x)^{j}=\sum_{j=0}^{r}(-1)^{j}x^{j}\sum_{i=j}^{r}a_{i}\binom{i}{j},
\end{align*}
we have
\begin{align*}	
    (-1)^{r+k}[x^k]\chi_M(\lambda)=&(-1)^{k}[x^{k}]T_M(1-x,0) \\
	=&\sum_{i=k}^{r}a_{i}\binom{i}{k}\\	
    \leq &\sum_{i=k}^{r}\binom{|X|-i-1}{r-i}\binom{i}{k}\\	
=&\binom{|X|}{r-k},
\end{align*}
where the last equation follows from Lemma \ref{identical2}. Therefore $(-1)^{r+k}[x^k]\chi_M(\lambda)=\binom{|X|}{r-k}$ if and only if $a_i= \binom{|X|-i-1}{r-i}$. Thus Theorem \ref{Chiff} holds from Theorem \ref{d3}.
\end{proof}

The following corollary is immediate. Its ``only if'' part appeared in classical textbook on algebraic graph theory, say, Proposition 10.6 in \cite{Biggs}.

\begin{corollary}\label{graphChrff}
Let $G$ be a graph of order $n$ and size $m$ containing a cycle.
 Then $g(G)\geq n-k+2$ if and only if
	\begin{align*}		
[\lambda^k]\chi_G(\lambda)=(-1)^{n+k}\binom{m}{n-k}.
	\end{align*}
\end{corollary}

Dually, we have:

\begin{corollary}\label{graphflowC}
Let $G$ be a connected graph of order $n\geq 2$, size $m$ and edge connectivity $\kappa')$.
 Then $\kappa'\geq m-n-k+3$ if and only if
	\begin{align*}		
[\lambda^k]F_G(\lambda)=(-1)^{m-n+k+1}\binom{m}{n+k-1}.
	\end{align*}
\end{corollary}

\begin{proof}. By Theorem \ref{Chiff} and the relation $F_G(\lambda)=\chi_{(M(G))^{*}}(\lambda)$, we have
$f_1(M(G))\leq n+k-3$ if and only if
	\begin{align*}		
[\lambda^k]F_G(\lambda)=(-1)^{m-n+k+1}\binom{m}{n+k-1}.
	\end{align*}
Since $m-\kappa'(G)=f_1(M(G))$, $\kappa'(G)\geq m-n+3-k$
if and only if $f_1(M(G))\leq n+k-3$. Thus the corollary holds.
\end{proof}

\subsection{$t=1$}

In 2013, the polynomials $T_G(x,1)$ and $T_G(1,y)$ were generalized to hypergraphs by K\'{a}lm\'{a}n \cite{Kalman}, called
interior and exterior polynomials, and further extended to integer polymatroids. Based on the definition of Tutte polynomials in terms of internal and external activities, Bernardi et al. \cite{Bernardi} defined a new Tutte polynomial for integer polymatroids, which combines interior and exterior polynomials into a bivariate polynomial. In \cite{Guan0}, Guan et al. studied coefficients of polymatroid Tutte polynomial with one variable fixed, including the interior and exterior polynomials. Guan et al. \cite{Guan}
established the following partial coefficients of Tutte polynomials with the variable $y$ fixed at $1$ for $(k+1)$-edge connected graphs, which were deduced from coefficients of the exterior polynomial of polymatroids. 

\begin{theorem}\emph{\cite{Guan}}\label{GJK}
Let $G$ be a graph of order $n$ and size $m$.
Then, if $G$ is $(k+1)$-edge connected if and only if for  $m-n-k+1\leq j\leq m-n+1$,
\[[y^j]T_G(1,y)=\binom{m-j-1}{n-2}.\]
\end{theorem}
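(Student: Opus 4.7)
The plan is to pass from edge connectivity to a counting problem for spanning edge sets via the corank--nullity expansion. Starting from
\[T_G(x,y)=\sum_{A\subseteq E}(x-1)^{rk(E)-rk(A)}(y-1)^{|A|-rk(A)},\]
setting $x=1$ annihilates every term with $rk(A)<n-1$, leaving only the spanning subsets (one may assume $G$ is connected throughout, since $(k+1)$-edge connectivity forces it and the converse half, via the degree of $T_G(1,y)$, will as well). Grouping by $|A|=n-1+t$ gives
\[T_G(1,y)=\sum_{t=0}^{g}\sigma_{n-1+t}(M(G))(y-1)^{t},\]
where $\sigma_{n-1+t}(M(G))$ is the number of spanning $(n-1+t)$-subsets of $E$. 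Expanding $(y-1)^{t}$ yields the basic identity
\[[y^{j}]T_G(1,y)=\sum_{t=j}^{g}(-1)^{t-j}\binom{t}{j}\sigma_{n-1+t}(M(G)).\]

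The second ingredient is the combinatorial translation of edge connectivity. A subset $A$ of size $n-1+t$ fails to be spanning if and only if its complement $E\setminus A$, of size $g-t$, contains an edge cut. Under $(k+1)$-edge connectivity every cut has at least $k+1$ edges, so whenever $|E\setminus A|\leq k$, i.e.\ $t\geq g-k$, the complement is too small to contain any cut and $A$ is automatically spanning. Conversely, if $G$ admits a cut of size $k'\leq k$ then its complement is a non-spanning set of size $n-1+(g-k')$ with $g-k'\geq g-k$. Hence $G$ is $(k+1)$-edge connected if and only if $\sigma_{n-1+t}(M(G))=\binom{m}{n-1+t}$ for every $t$ with $g-k\leq t\leq g$.

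For the forward implication I would substitute $\sigma_{n-1+t}=\binom{m}{n-1+t}$ into the coefficient formula at any $j$ with $g-k\leq j\leq g$; since every $t$ in that sum then satisfies $t\geq j\geq g-k$, the substitution is legal. After reindexing $u=t-j$ and using $\binom{m}{n-1+j+u}=\binom{m}{g-j-u}$, the claim reduces to the Vandermonde-type identity
\[\sum_{u=0}^{g-j}(-1)^{u}\binom{j+u}{u}\binom{m}{g-j-u}=\binom{m-j-1}{g-j}=\binom{m-j-1}{n-2},\]
which follows at once from rewriting $(-1)^{u}\binom{j+u}{u}=\binom{-j-1}{u}$ and applying Chu--Vandermonde.

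For the converse, observe that the basic identity expresses the vector $([y^{j}]T_G(1,y))_{g-k\leq j\leq g}$ as an upper-triangular image (with $1$'s on the diagonal) of $(\sigma_{n-1+t}(M(G)))_{g-k\leq t\leq g}$; this map is therefore invertible. Assuming the formula $[y^{j}]T_G(1,y)=\binom{m-j-1}{n-2}$ on the entire range and running the inverse transform forces $\sigma_{n-1+t}(M(G))=\binom{m}{n-1+t}$ for all $g-k\leq t\leq g$, which by the preceding characterization yields $(k+1)$-edge connectivity. The main obstacle is the binomial identity in the third paragraph; beyond that, the argument is simply a bookkeeping translation between two equivalent expansions of $T_G(1,y)$.
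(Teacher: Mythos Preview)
Your argument is correct and matches the paper's approach: the paper derives the same coefficient formula $[y^j]T_M(1,y)=\sum_{t\ge j}(-1)^{t-j}\binom{t}{j}\,\sigma_{r+t}(M)$ from the corank--nullity expansion, evaluates it with the same binomial identity (phrased there via Riordan rather than Vandermonde), and translates edge connectivity into the condition $\sigma_{r+t}=\binom{|X|}{r+t}$ for the top values of $t$ exactly as you do. The only difference is packaging: the paper works at the matroid level first (its Theorems~2.1 and~3.3) and then specializes to $M(G)$, and it records the converse through the ``if and only if'' of Theorem~3.3 rather than your equally valid triangular-invertibility argument.
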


Recently, Chen and Guo \cite{Chen} generalized Theorem \ref{GJK} by applying a result of Merino \cite{Merino} that is closely related to the Abelian sandpile model.

\begin{theorem}\emph{\cite{Chen}}\label{CG}
Let $G$ be a $(k+1)$-edge connected graph of order $n$ and size $m$.
Then, for $m-n+1-\frac{3(k+1)}{2}< j\leq m-n+1$,
\[[y^j]T_G(1,y)=\binom{m-j-1}{n-2}-\sum_{i=k+1}^{m-n-j+1}\binom{m-j-i-1}{n-2}|\mathcal{EC}_i(G)|,\]
where
$\mathcal{EC}_i(G)$ denotes the set of all minimal edge cuts with $i$ edges.	
\end{theorem}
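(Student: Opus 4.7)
The plan is to derive Theorem \ref{CG} as an immediate consequence of the matroid identity
\[[y^j]T_M(1,y)=\sum_{t=j}^{|X|-r}(-1)^{t-j}\binom{t}{j}\sigma_{r+t}(M)\]
announced in the abstract. Specializing to $M=M(G)$ (so $|X|=m$, $r=n-1$, and $|X|-r=g$) reduces the problem to computing the spanning-set counts $\sigma_{n-1+t}(M(G))$. Passing to edge complements, $\sigma_{n-1+t}(M(G))$ is exactly the number of subsets $B\subseteq E$ with $|B|=g-t$ that contain no bond of $G$.

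The crux is the following structural lemma: in a $(k+1)$-edge-connected graph, any two distinct bonds $C_1\ne C_2$ satisfy $|C_1\cup C_2|\ge\lceil 3(k+1)/2\rceil$. I would prove this by using that the cut space of $G$ is closed under symmetric difference (since $\delta(S_1)\triangle\delta(S_2)=\delta(S_1\triangle S_2)$), so $C_1\triangle C_2$ is a nonempty cut and hence contains a bond, yielding $|C_1\triangle C_2|\ge k+1$. Combined with $|C_1|,|C_2|\ge k+1$ and the identity
\[|C_1\cup C_2|=\tfrac{1}{2}\bigl(|C_1|+|C_2|+|C_1\triangle C_2|\bigr),\]
this gives the bound. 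Under the hypothesis $g-\tfrac{3(k+1)}{2}<j\le g$, every relevant $B$ has $|B|=g-t\le g-j<\tfrac{3(k+1)}{2}$, hence strictly smaller than $|C_1\cup C_2|$ for any two distinct bonds. A one-step inclusion-exclusion then delivers
\[\sigma_{n-1+t}(M(G))=\binom{m}{g-t}-\sum_{i=k+1}^{g-t}|\mathcal{EC}_i(G)|\binom{m-i}{g-t-i}.\]

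Substituting this into the matroid identity and interchanging the orders of summation reduces the problem to evaluating inner sums of the form $\sum_{t=j}^{N}(-1)^{t-j}\binom{t}{j}\binom{M}{N-t}$. The generating-function identity $\sum_{s\ge 0}(-1)^s\binom{s+j}{j}x^s=(1+x)^{-j-1}$ immediately shows
\[\sum_{t=j}^{N}(-1)^{t-j}\binom{t}{j}\binom{M}{N-t}=[x^{N-j}](1+x)^{M-j-1}=\binom{M-j-1}{N-j}.\]
Applying this with $(M,N)=(m,g)$ produces $\binom{m-j-1}{n-2}$, and with $(M,N)=(m-i,g-i)$ for each $i$ produces $\binom{m-j-i-1}{n-2}$ (both using $n-2=m-g-1$), which assemble into the formula of Theorem \ref{CG}.

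The main substantive step is the $\lceil 3(k+1)/2\rceil$ union bound on two distinct bonds; everything else is transparent once that lemma is available. The bound is moreover sharp (two minimum bonds can share $\lfloor(k+1)/2\rfloor$ edges and attain equality), which explains why the threshold $g-\tfrac{3(k+1)}{2}$ in Theorem \ref{CG} is exactly the boundary at which the one-term inclusion-exclusion ceases to capture all bond-containing sets.
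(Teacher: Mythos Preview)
Your proof is correct and follows essentially the same route as the paper: both start from the spanning-set identity (Theorem~\ref{genematroid}), establish the $3(k+1)/2$ threshold so that a one-step inclusion--exclusion computes $\sigma_{r+t}$ exactly in the relevant range, and then collapse the resulting double sum with the same binomial identity (the paper's Lemma~\ref{comequ}, which your generating-function computation reproves). The only cosmetic differences are that the paper first proves a general matroid statement (Theorem~\ref{Co2}) phrased in terms of hyperplanes and $f_2(M)$ before specializing to $M(G)$, and it obtains the $3(k+1)/2$ bound via a three-component partition argument (Lemma~\ref{cut}) rather than your symmetric-difference-of-bonds argument; the two lemmas are complement-dual reformulations of the same fact.
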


In the following we show that Theorem \ref{CG} can be derived from Theorem \ref{d2} (4) using the following lemma.

\begin{lemma}\label{cut}
	Let $G$ be a $k$-edge-connected graph with $m$ edges. Then
	$$f_2(M(G))\leq m-\frac{3k}{2}.$$	
\end{lemma}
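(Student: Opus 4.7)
The plan is to give a graph-theoretic description of the flats of rank $r-2$ in the cycle matroid $M(G)$ and then bound their size by applying the edge-connectivity hypothesis to three cuts simultaneously.

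First, since $G$ is $(k+1)$-edge-connected it is connected, so $r=rk(M(G))=n-1$ and a flat of rank $r-2$ has rank $n-3$. A closed edge set $F$ with $rk(F)=n-3$ is such that the spanning subgraph $(V,F)$ has exactly three connected components, whose vertex sets I denote $V_1,V_2,V_3$. The closure condition $cl_{M(G)}(F)=F$ forces every edge of $G$ with both endpoints in a common $V_i$ to belong to $F$; combined with the requirement that $(V,F)$ have exactly three components, this forces each induced subgraph $G[V_i]$ to be connected and $F$ to equal the union of intra-part edges $E(V_1)\cup E(V_2)\cup E(V_3)$. Consequently
\[|F|=m-\bigl(|E(V_1,V_2)|+|E(V_1,V_3)|+|E(V_2,V_3)|\bigr).\]

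Second, set $a=|E(V_1,V_2)|$, $b=|E(V_1,V_3)|$, $c=|E(V_2,V_3)|$. For each $i\in\{1,2,3\}$, the edges between $V_i$ and $V\setminus V_i$ form an edge cut of $G$ (since $V_i$ is a proper nonempty subset of $V$), and the $(k+1)$-edge-connectivity hypothesis yields
\[a+b\geq k+1,\quad a+c\geq k+1,\quad b+c\geq k+1.\]
Summing these three inequalities gives $2(a+b+c)\geq 3(k+1)$, whence $a+b+c\geq \tfrac{3(k+1)}{2}$ and so $|F|\leq m-\tfrac{3(k+1)}{2}$ for every rank-$(n-3)$ flat $F$. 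Taking the maximum over all such flats yields $f_2(M(G))\leq m-\tfrac{3(k+1)}{2}$.

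The only step requiring any thought is the matroid-to-graph dictionary in the first paragraph, i.e.\ identifying rank-$(n-3)$ flats of $M(G)$ with partitions $V=V_1\sqcup V_2\sqcup V_3$ into three parts each inducing a connected subgraph. Once this correspondence is in place, the cut-counting argument is immediate and there is no genuine obstacle.
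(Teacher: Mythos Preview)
Your proof is correct and follows essentially the same route as the paper's. The paper phrases the argument dually, working with an inclusion-minimal edge set $S$ whose removal leaves three components (so that $E\setminus S$ is a rank-$(n-3)$ flat) and bounding $|S|$ from below via the same three cut inequalities; your version makes the flat-to-tripartition correspondence more explicit, but the core cut-counting argument $2(a+b+c)\ge 3(k+1)$ is identical.
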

\begin{proof}
	Let $S$	be a minimal set of edges of $G$ such that $G-S$ has three connected components, say $G_1$, $G_2$ and $G_3$. Let $S_{ij}$ be the set of edges connecting $G_i$ and $G_j$. Since $G$ is $k$-edge-connected, it follows that $|S_{12}|+|S_{13}|\geq k$, $|S_{12}|+|S_{23}|\geq k$ and $|S_{13}|+|S_{23}|\geq k$. Note that $|S|=|S_{12}|+|S_{23}|+|S_{13}|$. Therefore $|S|\geq \frac{3k}{2}$. Since $E(G)\setminus S$ is a flat with rank $n-3=rk(M(G))-2$ in $M(G)$,  we have
	\[
	f_2(M(G))\leq m-\frac{3k}{2}.\qedhere
	\]
\end{proof}

If $G$ is $(k+1)$-edge-connected, then,
by Lemma \ref{cut}, we have
\[f_2(M(G))-rk(M(G))=f_2(M(G))-n+1\leq m-\frac{3(k+1)}{2}-n+1.\]
Note that minimal edge cuts of $G$ correspond to hyperplanes of $M(G)$. So Theorem \ref{d2} (4) implies Theorem \ref{CG}.

For a graph $G$, $g_2(M(G))$ is the size of the minimum subgraph of $G$ containing two cycles. For brevity, we write $g_2(G)$ for $g_2(M(G))$. By Theorem \ref{d2} (2), we have the following dual result of Theorem \ref{CG}:

\begin{corollary}\label{dcg}
	Let $G$ be a graph of order $n$ and size $m$ containing two cycles. Then, for
	$i\geq n-c(G)-g_2(G)+1$,
	\[[x^i]T_G(x,1)=\binom{m-i-1}{n-c(G)-i}-\sum_{j=1}^{n-c(G)-i} \binom{m-i-1-j}{m-n+c(G)-1}|\mathcal{C}_j(G)|,\]
where
$\mathcal{C}_j(G)$ denotes the set of cycles with $j$ edges.
\end{corollary}

\subsection{Unimodality}

A  sequence $a_{0}$,$a_{1}$,$\ldots$,$a_{n}$ of real numbers is called to be \emph{unimodal} if there exists an integer $k$ such that $a_{i}\leq a_{i+1}$ for any $i<k$ and $a_{i}\geq a_{i+1}$ for any $i>k$.
As another application of Theorem \ref{d2}, we have:
\begin{theorem}\label{Uni}
	Let $M=(X,rk)$ be a matroid.  Then, for any $t$ with $t\leq 1$, the following two sequences are unimodal:
\begin{itemize}
  \item [(1)] $[x^{r}]T_M(x,t), [x^{r-1}]T_M(x,t),\ldots,[x^{r-g_{2}(M)+3}]T_M(x,t);$
  \item [(2)] $[y^{|X|-r}]T_M(t,y),[y^{|X|-r-1}]T_M(t,y),\ldots, [y^{f_{2}(M)-r+3}]T_M(t,y)$.
\end{itemize}
\end{theorem}

\begin{proof}

By duality we only need to prove (1). If $[x^{i-1}]T_M(x,t'+1)-[x^i]T_M(x,t'+1)<0$ for any $i\in [r-g_{2}(M)+4, r]$, then the conclusion is clear. Therefore,
it is sufficient to prove that if there exists an integer $i\in [r-g_{2}(M)+4, r]$ such that $[x^{i-1}]T_M(x,t'+1)-[x^i]T_M(x,t'+1)\geq 0$, then $[x^{i}]T_M(x,t'+1)-[x^{i+1}]T_M(x,t'+1)\geq 0$.

If $r=|X|$, then $T_M(x,y)=x^{r}$. The conclusion holds.
If $r=|X|-1$, then $T_M(x,y)=(x+x^{2}+\cdots+x^{|C|-1}+y)y^{|X|-|C|}$, where $C$ is the unique circuit of $M$. The conclusion holds.
We now assume that $r\leq m-2$. By Theorem \ref{d2} (1), for any $k\in [r-g_2(M)+4,r]$, we have
\begin{align*}	
    & [x^{k-1}]T_M(x,t'+1)-[x^k]T_M(x,t'+1) \\
=&\binom{\alpha}{r-k+1}-\sum_{j=0}^{r-k+1}\binom{\alpha-j}{r-k+1-j}|\mathcal{C}_{j}|
   +t'\sum_{j=0}^{r-k+2}\binom{\alpha-j}{r-k-j+2}|\mathcal{C}_{j}|.
\end{align*}
Using the identity $\binom{b-1}{a-1}=\frac{a}{b}\binom{b}{a}$ and the inequality $\frac{a-j}{b-j}\leq \frac{a}{b}$ for any $b\geq a\geq j\geq 0$, we have
\begin{align*}	
    & [x^{k}]T_M(x,t'+1)-[x^{k+1}]T_M(x,t'+1) \\
    =&\binom{\alpha-1}{r-k}-\sum_{j=0}^{r-k}\binom{\alpha-j-1}{r-k-j}|\mathcal{C}_{j}|
   +t'\sum_{j=0}^{r-k+1}\binom{\alpha-j-1}{r-k-j+1}|\mathcal{C}_{j}| \\
=&\binom{\alpha-1}{r-k}-\sum_{j=0}^{r-k+1}\binom{\alpha-j-1}{r-k-j}|\mathcal{C}_{j}|
   +t'\sum_{j=0}^{r-k+2}\binom{\alpha-j-1}{r-k-j+1}|\mathcal{C}_{j}|\\	=&\frac{r-k+1}{\alpha}\binom{\alpha}{r-k+1}-\sum_{j=0}^{r-k+1}\frac{r-k+1-j}{\alpha-j}\binom{\alpha-j}{r-k+1-j}|\mathcal{C}_{j}|\\	&+t'\sum_{j=0}^{r-k+2}\frac{r-k-j+2}{\alpha-j}\binom{\alpha-j}{r-k-j+2}|\mathcal{C}_{j}|\\
   \geq&\frac{r-k+1}{\alpha}\binom{\alpha}{r-k+1}-\sum_{j=0}^{r-k+1}\frac{r-k+1}{\alpha}\binom{\alpha-j}{r-k+1-j}|\mathcal{C}_{j}|\\	&+t'\sum_{j=0}^{r-k+2}\frac{r-k+1}{\alpha}\binom{\alpha-j}{r-k-j+2}|\mathcal{C}_{j}|\\
   \geq&\frac{r-k+1}{\alpha}\left([x^{k-1}]T_M(x,t'+1)-[x^k]T_M(x,t'+1)\right).
\end{align*}
Thus, if $[x^{k-1}]T_M(x,t'+1)-[x^k]T_M(x,t'+1)\geq 0$, then $[x^{k}]T_M(x,t'+1)-[x^{k+1}]T_M(x,t'+1)\geq 0$.
This completes the proof of (1).
\end{proof}

\section*{Acknowledgements}

We would like to thank Dr. Meiqiao Zhang for providing an elegant and concise
proof of the combinatorial identity in Lemma \ref{identical1}. We also thank the anonymous referees of the early version of the paper leading to the present paper.

This work is supported by National Natural Science Foundation of China
(Nos. 12571379, 12401462, 12571366), the Natural Science Foundation of Shanxi Province (No. 202403021222034) and Scientific Research Start-Up Foundation of Jimei University (No. ZQ2024116).

\section*{Declarations}
\noindent
{ \bf Conflict of interest}
The authors declare that they have no conflict of interest.



\begin{thebibliography}{00}
	


\bibitem{Bernardi}
O. Bernardi, T. K\'{a}lm\'{a}n, A. Postnikov, Universal Tutte polynomial,
	\emph{Adv. Math.} 402 (2022), 108355.

\bibitem{Bjorner}
A. Bj\"{o}rner, The homology and shellability of matroids and geometric lattices, Matroid Applications,
Encyclopedia of Mathematics and its Applications, vol. 40, Cambridge University Press, Cambridge, 1992.

\bibitem{Biggs}
N. Biggs, Algebraic Graph Theory, Cambridge University Press, 1974.


\bibitem{Chen}
	H. Chen, M. Guo,
	Coefficients of the Tutte polynomial and minimal edge cuts of a graph,
	\emph{Adv. in Appl. Math.}
	166 (2025)	102868.
	
	\bibitem{Crapo}
	H. Crapo, The Tutte polynomial, \emph{Aequationes Math.} 3 (1969) 211--229.

		
	
\bibitem{Guan0}

X. Guan, W. Yang, X, Jin, On the polymatroid Tutte polynomial, \emph{J. Combin. Theory Ser. A} 201 (2024) Paper No. 105798, 14 pp.

\bibitem{Guan}
X. Guan, X. Jin, T. K\'{a}lm\'{a}n, A deletion-contraction formula and monotonicity properties for the
polymatroid Tutte polynomial, \emph{Int. Math. Res. Not.} 19 (2025) 1--16.
	
\bibitem{Kalman}
T. K\'{a}lm\'{a}n, A version of Tutte's polynomial for hypergraphs, \emph{Adv. Math.} 244 (2013) 823--873.	
	
\bibitem{Merino}
C. Merino, Chip firing and the Tutte polynomial, \emph{Ann. Comb.} 1 (1997) 253--259.

\bibitem{MerinoRa}
C. Merino, M. Ram\'{i}rez-Ib\'{a}\~{n}ez, G. Rodr\'{i}guez-S\'{a}nchez, The Tutte polynomial of some matroids, \emph{Int. J. Comb.} (2012) 430859.
	
\bibitem{Oxley}
	J. Oxley, Matroid Theory, second ed., Oxford Grad. Texts in Math. 21,
	Oxford University Press, Oxford, 2011.

\bibitem{Riordan}	
J. Riordan, Combinatorial identities, Wiley, New York, 1968.

\bibitem{Teo}
C. Teo,  K. Koh, On chromatic uniqueness of uniform subdivisions of graphs,  \emph{Discrete Math.} 128 (1994)  327--335.

\bibitem{Tutte}
	W. Tutte, A contribution to the theory of chromatic polynomials,
	\emph{Canad. J. Math.} 6 (1954) 80--91.

\bibitem{Whitney}
H. Whitney, A logical expansion in mathematics, \emph{Bull. Amer. Math. Sot.} 39 (1932) 572--579.

\end{thebibliography}
\end{document}